\theoremstyle{plain}
\newtheorem{Thm}{Theorem}[section]
\newtheorem{Prop}[Thm]{Proposition}
\theoremstyle{definition}
\theoremstyle{remark}
\newcommand{\cS}{{\mathcal S}}
\newcommand{\cT}{{\mathcal T}}
\begin{document}
\title[A Bump in the Road]{A Bump in the Road in Elementary Topology}
\author{Bruce Blackadar}
\address{Department of Mathematics and Statistics/0084 \\ University of Nevada, Reno \\ Reno, NV 89557, USA}
\email{bruceb@unr.edu}

\date{January 24, 2017}

\maketitle
\begin{abstract}
We observe a subtle and apparently generally unnoticed difficulty with the definition of the relative topology
on a subset of a topological space, and with the weak topology defined by a function.
\end{abstract}

\begin{quote}
`` `Obvious' is the most dangerous word in mathematics.''

\flushright
{\em E.\ T.\ Bell}\footnote{\cite[p.\ 16]{BellMathematics}.}
\end{quote}

\section{Relative Topology}

One of the most elementary constructions in general topology is the definition of the relative or
subspace topology on a subset of a topological space.  But it turns out it is not quite as elementary to
do this properly as has generally been thought.

If $(X,\cT)$ is a topological space and $Y\subseteq X$, the {\em relative topology},
or {\em subspace topology}, on $Y$ from $\cT$ is
$$\cT_Y=\{U\cap Y:U\in\cT\}$$
i.e.\ the open sets in $Y$ (called the {\em relatively open sets}) are the intersections with $Y$ of the open sets in $X$. 

The main issue we discuss is whether $\cT_Y$ is really a topology on $Y$.  This is generally
considered ``obvious'' or ``trivial.''  We write out the ``obvious'' argument: 

\paragraph{}
\begin{Prop}\label{RelTopProp}
$\cT_Y$ is a topology on $Y$.
\end{Prop}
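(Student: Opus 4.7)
The plan is to verify directly the three defining properties of a topology on $Y$, working from the formula $\cT_Y=\{U\cap Y:U\in\cT\}$ and using that $\cT$ is a topology on $X$, together with the distributivity identity $(\bigcup_\alpha U_\alpha)\cap Y=\bigcup_\alpha(U_\alpha\cap Y)$ and its finite analogue for intersections.

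I would first dispose of the easy parts. Since $\emptyset=\emptyset\cap Y$ with $\emptyset\in\cT$, and $Y=X\cap Y$ with $X\in\cT$, both $\emptyset$ and $Y$ lie in $\cT_Y$. For a finite intersection $V_1\cap V_2$ with $V_i\in\cT_Y$, I would write $V_i=U_i\cap Y$ for some $U_i\in\cT$ and observe that $V_1\cap V_2=(U_1\cap U_2)\cap Y\in\cT_Y$ because $U_1\cap U_2\in\cT$; an induction then handles any finite intersection.

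The interesting case is closure under arbitrary unions, and I expect this to be the ``bump in the road'' hinted at in the introduction. The naive argument runs as follows: given a family $\{V_\alpha\}_{\alpha\in A}\subseteq\cT_Y$, for each $\alpha\in A$ pick some $U_\alpha\in\cT$ with $V_\alpha=U_\alpha\cap Y$, and then
\[
\bigcup_{\alpha\in A}V_\alpha \;=\; \bigcup_{\alpha\in A}(U_\alpha\cap Y) \;=\; \Bigl(\bigcup_{\alpha\in A}U_\alpha\Bigr)\cap Y \;\in\; \cT_Y.
\]
The hidden subtlety is the phrase ``for each $\alpha\in A$ pick $U_\alpha$'': since the correspondence $U\mapsto U\cap Y$ is in general many-to-one, this is a selection over a possibly arbitrary index set and so tacitly invokes the Axiom of Choice.

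To sidestep AC, I would make a canonical choice of preimage. For each $\alpha\in A$ define
\[
U_\alpha \;=\; \bigcup\{\,U\in\cT : U\cap Y=V_\alpha\,\}.
\]
This definition requires no selection function; the collection being unioned is non-empty because $V_\alpha\in\cT_Y$, so $U_\alpha$ is a union of elements of $\cT$ and hence lies in $\cT$, and a direct computation gives $U_\alpha\cap Y=V_\alpha$. With these canonical representatives in hand the union identity above applies verbatim, completing the verification that $\cT_Y$ is a topology on $Y$.
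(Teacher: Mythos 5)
Your proof is correct and matches the paper's own corrected argument exactly: you spot the hidden use of the Axiom of Choice in the arbitrary-union step and avoid it by the same canonical device, taking $U_\alpha$ to be the union of all $U\in\cT$ with $U\cap Y=V_\alpha$ (the largest such open set). Nothing to add.
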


\begin{proof}
We have $\emptyset=\emptyset\cap Y$ and $Y=X\cap Y$, so $\emptyset,Y\in\cT_Y$.
If $U_1\cap Y,\dots,U_n\cap Y\in\cT_Y$, where $U_1,\dots,U_n\in\cT$, then
$$(U_1\cap Y)\cap\cdots\cap(U_n\cap Y)=(U_1\cap\cdots\cap U_n)\cap Y\in\cT_Y$$
since $U_1\cap\cdots\cap U_n\in\cT$.
If $\{U_i\cap Y:i\in I\}$ is a collection of sets in $\cT_Y$, where each $U_i\in\cT$, then 
$$\bigcup_{i\in I}(U_i\cap Y)=\left ( \bigcup_{i\in I}U_i\right ) \cap Y\in\cT_Y$$
since $\cup_{i\in I}U_i\in\cT$.
\end{proof}

Most standard topology references, e.g.\ \cite{Bourbaki}, \cite{Dugundji}, \cite{Engelking}, \cite{HallS}, \cite{HockingY},
\cite{Kasriel}, \cite{Kelley}, \cite{Munkres}, \cite{Willard}, either give this argument explicitly or state that
the result is ``trivial'' or ``easily verified,'' presumably using this argument.

But actually there is a subtle problem with the last part of the argument:  how do we know that
every indexed collection of sets in $\cT_Y$ is of the form  $\{U_i\cap Y:i\in I\}$ for some $U_i\in\cT$?
In fact, the Axiom of Choice (AC) is needed to assert this, since for a given $V\in\cT_Y$ there
are in general many $U\in\cT$ for which $V=U\cap Y$, and one must somehow be chosen.  
(The same comment might apply to the finite intersection argument, but there
only finitely many choices need to be made so the AC is not needed.)  

When the AC was first formulated and its nature understood, it was observed that mathematicians
had already been using it extensively without comment and generally without notice.  The relative topology
example shows that this is still happening.

But does \ref{RelTopProp} really require the AC?  There is, in fact, a simple way to avoid it: 
there is a systematic way to choose the $U_i$ (I am indebted to
S.\ Jabuka for this observation).  If $V\in\cT_Y$, there is a largest open set $U\in\cT$  such that
$V=U\cap Y$, namely the union of all $W\in\cT$ for which $V=W\cap Y$.  A correct phrasing of
the proof would thus be:

\begin{proof}
We have $\emptyset=\emptyset\cap Y$ and $Y=X\cap Y$, so $\emptyset,Y\in\cT_Y$.
If $V_1,\dots,V_n\in\cT_Y$, then, for each $k$, $V_k=U_k\cap Y$ for some
$U_k\in\cT$; so 
$$V_1\cap\cdots\cap V_n=(U_1\cap Y)\cap\cdots\cap(U_n\cap Y)=(U_1\cap\cdots\cap U_n)\cap Y\in\cT_Y$$
since $U_1\cap\cdots\cap U_n\in\cT$.
If $\{V_i:i\in I\}$ is a collection of sets in $\cT_Y$, for each $i\in I$ let $U_i$ be the union of all $W\in\cT$ such that
$V_i=W\cap Y$.  Then $U_i\in\cT$ and $V_i=U_i\cap Y$ for each $i$, so
$$\bigcup_{i\in I}V_i=\bigcup_{i\in I}(U_i\cap Y)=\left ( \bigcup_{i\in I}U_i\right ) \cap Y\in\cT_Y$$
since $\cup_{i\in I}U_i\in\cT$.
\end{proof}

There is an alternate argument which avoids 
the AC in
\cite{Kuratowski} (which is the only topology book I have found with a complete correct proof of 
\ref{RelTopProp}).  Recall that a {\em Kuratowski closure operation} on a set $Y$ is an assignment
$A\mapsto\bar A$ for each subset $A$ of $Y$, with the properties $\bar\emptyset=\emptyset$, 
$A\subseteq\bar A=\bar{\bar A}$ for all $A$, and $\overline{A\cup B}=\bar A\cup\bar B$ for all $A,B$.
It is easy to show (the argument is in many standard references and can be found in
\cite{Blackadar}, and does not use the AC) that any 
Kuratowski closure operation defines closure with respect to a unique
topology for which the closed sets are precisely the sets $A$ for which $\bar A=A$.

To prove \ref{RelTopProp}, for $A\subseteq Y$ define $\tilde A=\bar A \cap Y$, where $\bar A$
is the closure of $A$ in $X$.  It is nearly trivial to check (without using the AC) that $A\mapsto
\tilde A$ is a Kuratowski closure operation on $Y$, and that the closed sets with respect to the
corresponding topology are precisely the complements of the sets in $\cT_Y$.  It follows that
the topology defined by this closure operation is $\cT_Y$, and in particular $\cT_Y$ is a topology.

\section{The Weak Topology Defined by a Function}

If $(X,\cT)$ is a topological space, $Y$ a set, and $f:Y\to X$ a function, there is a weakest topology
on $Y$ making $f$ continuous.  It should be
$$\cT_Y=\{f^{-1}(U):U\in\cT\}\ .$$
But is $\cT_Y$ actually a topology?  If $f$ is surjective, there is no difficulty verifying this (using that
preimages respect unions and intersections).  However, if $f$ is not surjective, we run into the same
problem as in \ref{RelTopProp} (which is actually just the case where $f$ is injective), since many
different open sets in $X$ can have the same preimage in $Y$, so the AC must apparently be used
to show that $\cT_Y$ is a topology.  

To show that $\cT_Y$ is a topology without using the AC,  the union trick works, and
the argument via Kuratowski closure operations  
works here too: for $A\subseteq Y$, set $\tilde A=f^{-1}(\overline{f(A)})$.
There is also an alternate argument.  
Let $Z=f(Y)\subseteq X$.  By \ref{RelTopProp} we have that
$\cT_Z$ is a topology on $Z$.  If $\cS$ is a topology on $Y$, then
$f$ is continuous as a function from $(Y,\cS)$ to $(X,\cT)$ if and only if it is continuous as a
function from $(Y,\cS)$ to $(Z,\cT_Z)$, and it is easily verified that $\cT_Y=(\cT_Z)_Y$.  But $f:Y\to Z$ is surjective,
so the AC is not needed to prove that $(\cT_Z)_Y$ is a topology.

\section{Should We Worry About the AC?}

Most modern mathematicians have no serious qualms about using the AC, and largely share
the opinion of Ralph Boas \cite[p.\ xi]{BoasPrimer}:
\begin{quote}
``[A]fter G\"{o}del's results, the assumption of the axiom of choice can do no mathematical harm
that has not already been done.''
\end{quote}

\bigskip

\noindent
As an analyst, I have no qualms about it myself.  But I do believe:
\begin{enumerate}
\item[1.]  When the AC is used, it should be mentioned.
\item[2.]  The AC should not be used if it is not needed.
\end{enumerate}
There is a gray area with 2: the AC can drastically simplify proofs of some results which can be
proved without it.  But the relative topology case is one where use of the AC is of 
highly doubtful benefit.

\bibliography{reltopref}
\bibliographystyle{alpha}

\end{document}